\newcommand{\RR}{\mathbb{R}}
\newcommand{\paren}[1]{\left( #1 \right)}
\newcommand{\brak}[1]{\left[ #1 \right]}
\newcommand{\pd}[2]{\frac{\partial#1}{\partial#2}}
\newcommand{\slp}{\mathcal{S}}
\newcommand{\dlp}{\mathcal{D}}
\newcommand{\bc}{\mathcal{B}}
\newcommand{\OO}{\mathcal{O}}
\newcommand{\Schrod}{Schr\"{o}dinger\ }
\newcommand{\bA}{{\bf A}}
\newtheorem{theorem}{Theorem}
\newtheorem{remark}{Remark}
\title{Transparent Boundary Conditions for the Time-Dependent \Schrod
Equation with a Vector Potential}
\author{Jason Kaye 
  \thanks{Courant Institute of Mathematical Sciences,
    New York University, New York, NY 10012.
    (Email: {jkaye@cims.nyu.edu}).}
  \and
  Leslie Greengard
  \thanks{Courant Institute of Mathematical Sciences,
    New York University, New York, New York 10012 and
    Flatiron Institute, Simons Foundation, New York, New York 10010.
    (Email: {greengard@cims.nyu.edu}).}
}
\date{}
\begin{document}

\maketitle

\begin{abstract}
We consider the problem of constructing transparent boundary conditions
for the time-dependent Schr\"odinger equation with a compactly
supported binding potential and, if desired, a spatially uniform,
time-dependent electromagnetic vector potential. Such conditions prevent
nonphysical boundary effects from corrupting a numerical solution in a
bounded computational domain. We use ideas from potential theory to
build exact nonlocal conditions for arbitrary piecewise-smooth
domains. These generalize the standard Dirichlet-to-Neumann
and Neumann-to-Dirichlet maps known for the equation in one dimension without a vector
potential. When the vector potential is included,
the condition becomes non-convolutional in time. For the one-dimensional
problem, we propose a simple discretization scheme and a fast algorithm to
accelerate the computation.

\vspace{.3in}

\hrule

\vspace{.1in}

This is the second version of the manuscript, correcting an important
omission: namely the paper \cite{ermolaev99}, which derived exact
transparent boundary conditions from Green's representation theorem, as
we do in Section \ref{pottheory} below.  We have made minor changes
to the presentation, emphasizing that the novelty of the present work is
in the derivation of a fast algorithm which makes the Green's
function-based approach practical.

\vspace{.1in}

\hrule
\end{abstract}

\section{Introduction}

We begin with the time-dependent \Schrod equation in free space, given by
\begin{equation}\label{eq:schrodfree}
  \begin{aligned}
    i u_t(x,t) &= - \Delta u(x,t) + i \bA(t) \cdot \nabla u(x,t) +
    V(x,t) \, u(x,t), 
    \quad & x \in \RR^d, \enspace t \in [0,T], \\
    u(x,0) &= u_0(x),  & \\
    |u(x,t)| &\to 0,   &|x| \to \infty.
  \end{aligned}
\end{equation}
Here, $u(x,t)$ represents a complex-valued wave function, 
$u_0(x)$ is the given initial data, and 
$V(x,t)$ is a binding potential, and
$\bA(t)$ is an applied electromagnetic vector potential.
We assume that $u_0(x)$ and $V(x,t)$ are supported
in a bounded domain $\Omega$ in $\RR^d$.
In order to solve 
\eqref{eq:schrodfree} on $\Omega$ alone, one must impose 
conditions on the boundary $\partial \Omega$ so that the solution of the
corresponding equation matches the solution to the free space
problem with sufficient precision. Such conditions are referred to by several 
names, including {\em absorbing boundary conditions}, {\em artificial boundary
conditions}, and {\em non-reflecting boundary conditions}.
Their purpose is typically to prevent spurious reflections of outgoing
waves from $\partial \Omega$ (but see Remark \ref{rem:theycomeback}
below). We refer to a boundary condition
\[\bc[u](x,t) = 0, \quad  x \in \partial \Omega\]
as an exact {\em transparent boundary condition} (TBC) when the solution of 
\begin{equation}\label{eq:schrodbvp}
  \begin{aligned}
    i u_t(x,t) &= - \Delta u(x,t) + i \bA(t) \cdot \nabla u(x,t) +
    V(x,t) \, u(x,t),
    \quad & x \in \Omega, \enspace t \in [0,T], \\
    \bc[u](x,t) &= 0,  & x \in \partial \Omega, \\
    u(x,0) &= u_0(x)  &
  \end{aligned}
\end{equation}
is equal to the solution of \eqref{eq:schrodfree}, restricted to
$\Omega$.

Exact TBCs for a
variety of linear and nonlinear wave propagation problems have been
studied extensively. They are
typically nonlocal in space and time, so that fast and memory-efficient algorithms are
required for their use in large problems. For the free space
\Schrod equation with $\bA(t) = 0$, TBCs are known in certain domains, and
several fast algorithms are available. For $d = 1$, Baskakov and Popov
\cite{baskakov91} derived an exact condition which takes the form of a
history-dependent Dirichlet-to-Neumann map and may be discretized as a
Robin condition. Fast algorithms to
implement this condition were introduced in
\cite{jiang04,lubich02,schadle06}. In higher dimensions, TBCs have been
derived for the circle, sphere, and half-space using a spectral
decomposition of the solution in space, and either a Laplace
transform in time or appropriate special functions 
\cite{hagstrom99,jiang08,han04,han07,antoine08}. Fast algorithms applicable to
these conditions are related to those developed for the one-dimensional
case \cite{lubich02,schadle06,jiang08}. A similar approach for general
domains was
introduced in \cite{schadle02}, but the conditions are given in the
Laplace transform domain, and efficiently recovering time domain
conditions is not straightforward. Half-space conditions were derived by a different
method in \cite{feshchenko11,feshchenko13,vaibhav15b}, and used to
assemble conditions for a rectangle in $d = 2$ and a box in $d = 3$.
These are written in terms of auxiliary unknowns, obtained by solving
lower-dimensional \Schrod equations on boundary faces and edges. Fast algorithms
for this approach were not considered. In another class
of methods, exact TBCs have been developed directly for particular 
discretizations of the \Schrod equation. Examples include
\cite{aes2003} for a waveguide geometry, which is closely related to the
one-dimensional case, \cite{arnold12} for the circle, and
\cite{ji18} for the rectangle. 

For the case $\bA(t) \neq 0$, there are fewer results.
In \cite{vaibhav15}, TBCs were presented for $d = 1$ with some
restrictions on the vector potential, and the fast
algorithm of \cite{lubich02} was adapted to the conditions. More
general TBCs for $d = 1$ were presented in
\cite{feshchenko17} without a fast algorithm.
In \cite{ermolaev99},
the full Green's function was derived for any dimension,
and it was observed that exact TBCs follow from Green's representation theorem,
although no fast algorithms were described.
We use this approach below
and show how exact
TBCs for \eqref{eq:schrodfree}, with or without a vector potential, are easily
obtained for
arbitrary piecewise-smooth domains in any dimension. 
A byproduct of the potential theory formulation
is that we obtain a representation of the solution which may be
evaluated anywhere in the exterior domain $\RR^d \backslash \Omega$, if
desired.  While the integral operators involved in the conditions are
convolutional in time when $\bA(t) = 0$, this is not true otherwise. 
For the one-dimensional case, we introduce
a new class of fast algorithms to apply these nonlocal,
non-convolutional operators.

Finally, we note that there are many methods used to prevent boundary reflections 
without implementing exact TBCs. Their purpose is to avoid the complexity
associated with exact nonlocal conditions, or to mimic an exact
condition when none is known. For the \Schrod equation, they include the
method of mask functions, perfectly matched layers, exterior complex
scaling, approximate absorbing boundary conditions based on
pseudodifferential calculus, and splitting methods.  We do not discuss
these approaches here, but refer the reader to
\cite{antoine08,antoine17,degiovannini15,weinmuller17,lorin09} and the
references therein.

\begin{remark}\label{rem:theycomeback}
  The vector potential term in \eqref{eq:schrodfree}
  represents an applied electromagnetic field, and causes advection of
  the solution. Therefore, depending on its form, this term may cause
  waves which have left the domain $\Omega$ to return later. This poses
  a challenge for the various approximate absorbing boundary conditions
  in common use, which, in essence, assume that waves crossing $\partial
  \Omega$ are always outgoing.  The TBCs we derive do not involve such
  an assumption, as will be demonstrated in our numerical results.
\end{remark}

The paper is organized as follows. 
In section \ref{pottheory}, we introduce the relevant aspects of potential
theory for the \Schrod equation 
and derive exact TBCs.
In section \ref{sec:implementation}, we describe a simple numerical
implementation of the exact condition in the one-dimensional case.
In section \ref{fastalg}, we discuss a fast algorithm for
evaluating the nonlocal, non-convolutional integral operators which appear in the 
TBC when the vector potential is present.
In section \ref{numres}, we illustrate the performance of our method
with several numerical examples, and in section \ref{conclusions}, we
mention a few directions for future work.

\section{Derivation of TBCs by potential theory} \label{pottheory}

In the absence of the binding potential and the vector potential, 
the wave function satisfies the free-particle \Schrod equation,
\begin{equation} \label{eq:freeparticle}
  i u_t = - \Delta u,
\end{equation}
whose Green's function \cite{economou} is given by 
\[K(x,t) = \frac{e^{i |x|^2 / 4t}}{({4 \pi i t})^{d/2}}.\]
Using this Green's function, we can define single and double layer
potentials with densities $\sigma$ and $\mu$, respectively, on a
spacetime boundary. In one dimension, these take the form
\begin{equation} \label{eq:slp1d}
  \slp[\sigma](x,t) := \int_0^t K(x-x_0,t-s) \, \sigma(s) \, ds = \frac{1}{\sqrt{4 \pi i}}
  \int_0^t \frac{e^{i (x-x_0)^2 / 4(t-s)}}{\sqrt{t-s}} \, \sigma(s) \, ds ,
\end{equation}
\begin{equation} \label{eq:dlp1d}
\dlp[\mu](x,t) := \int_0^t \frac{\partial{K}}{\partial x}(x-x_0,t-s) \, \mu(s) \, ds = 
\frac{\sqrt{i}}{4 \sqrt{\pi}}
  \int_0^t 
(x-x_0) \,
\frac{e^{i (x-x_0)^2 / 4(t-s)}}{(t-s)^{3/2}} \, \mu(s) \, ds ,
\end{equation}
for the boundary point $x = x_0$. The single layer potential is
continuous, and the double layer potential satisfies
the jump condition
\begin{equation} \label{kjump1d}
\lim_{x \to x_0^\pm} \dlp[\mu](x,t) = \pm \frac{i}{2} \mu(t).
\end{equation}
For $d>1$ and $\partial \Omega$ piecewise-smooth,
\begin{equation} \label{eq:slpnd}
    \slp[\sigma](x,t) := \int_0^t \int_{\partial \Omega} 
 K(x-y,t-s) \, \sigma(y,s) \, dS(y) \, ds,
\end{equation}
\begin{equation} \label{eq:dlpnd}
    \dlp[\mu](x,t) := \int_0^t \int_{\partial \Omega} 
\frac{\partial K}{\partial \nu_{y}}(x-y,t-s) \, \mu(y,s) \, dS(y) \, ds.
\end{equation}
Here and throughout, $\nu$ denotes a unit outward normal with respect to
the domain $\Omega$. The single layer potential is again continuous, and
the jump condition for the double layer potential is similar to that for the
parabolic case \cite{pogorzelski},
\begin{equation} \label{eq:kjump}
\lim_{\epsilon\rightarrow 0+}
\dlp[\mu](x_0 \pm \epsilon \nu_{x_0},t)
= \mp \frac{i}{2}\mu(x_0,t) + \dlp^\ast[\mu](x_0,t),
\end{equation} 
for $x_0 \in \partial \Omega$, excluding non-smooth boundary points, where
$\dlp^\ast[\mu]$ is the principal value of $\dlp[\mu]$.

When the vector potential is non-zero, the governing equation is
\begin{equation} \label{eq:schrodand}
  i u_t = - \Delta u + i \bA(t) \cdot \nabla u.
\end{equation}
In one dimension, $A(t)$ is a scalar and we have
\begin{equation} \label{eq:schroda}
  i u_t = - u_{xx} + i A(t) u_x.
\end{equation}
Let $\varphi(t)$ be the indefinite integral of $\bA(t)$ (or $A(t)$)
\[\varphi(t) = \int_0^t \bA(s) \, ds.\]
It is straightforward to verify that if $u(x,t)$ satisfies \eqref{eq:freeparticle}, 
then $u(x+\varphi(t),t)$
satisfies either \eqref{eq:schrodand} or \eqref{eq:schroda}, depending on the 
ambient dimension. 
The Green's function for \eqref{eq:schrodand},
\eqref{eq:schroda} is therefore given by
\begin{equation}\label{eq:greenfuna}
  K_A(x,t,s) = K(x+\varphi(t) - \varphi(s),t-s) = \frac{e^{i |x
  + \varphi(t) - \varphi(s)|^2 / 4(t-s)}}{(4 \pi i (t-s))^{d/2}},
\end{equation}
as shown in \cite{ermolaev99}.
We note that the advective nature of the vector potential term is
apparent from this viewpoint.

Single and double layer potentials $\slp_A[\sigma]$ and $\dlp_A[\mu]$
may be defined
as in \eqref{eq:slp1d}, \eqref{eq:dlp1d}, \eqref{eq:slpnd}, \eqref{eq:dlpnd},
with $K_A$ in place of $K$.
For $d > 1$,
the double layer potential satisfies the jump condition
\eqref{eq:kjump}, with $\dlp_A$ and $\dlp_A^\ast$ in place of
$\dlp$ and $\dlp^\ast$. $\dlp_A^\ast[\mu]$ again denotes 
the principal value of $\dlp_A[\mu]$.
For $d = 1$, the jump condition also includes the principle value term,
which was identically zero in \eqref{kjump1d}:
\begin{equation} \label{kajump1d} 
\lim_{x \to x_0^\pm} \dlp_A[\mu](x,t) = \pm \frac{i}{2}
\mu(t) + \dlp_A^\ast[\mu](t),
\end{equation}
with 
\begin{equation} \label{eq:dlp1dast}
  \dlp_A^\ast[\mu](t) =
    \frac{\sqrt{i}}{4 \sqrt{\pi}} \int_0^t \frac{\varphi(t)-\varphi(s)}{(t-s)^{3/2}}
  \exp\paren{i \, \frac{(\varphi(t)-\varphi(s))^2}{4(t-s)}} \, \mu(s) \, ds.
\end{equation}
The single layer potential $\slp_A[\sigma]$ is continuous in both cases.

\subsection{Green's identities and TBCs}

The following theorem gives TBCs for
\eqref{eq:schrodfree} on an arbitrary bounded domain $\Omega$ with a
piecewise-smooth boundary.
It can be found in slightly different form in 
\cite{ermolaev99}.

\begin{theorem} 
Let $x \in \RR^d \setminus \overline{\Omega}$ and suppose that $u$ satisfies the 
free particle \Schrod equation \eqref{eq:freeparticle} in this region, with zero initial
data. Then for $d > 1$, 
\begin{equation} \label{eq:greenform}
i u(x,t) = \slp_A \left[ \pd{u}{\nu} - i \nu \cdot \bA u \right](x,t)
- \dlp_A[u](x,t).
\end{equation}
If $x_0 \in \partial \Omega$, excluding non-smooth boundary points, we have
\begin{equation} \label{eq:green}
  \frac{i}{2} u(x_0,t) = \slp_A \left[\pd{u}{\nu} - i
  \nu \cdot \bA u \right](x_0,t) - \dlp^\ast_A[u](x_0,t).
\end{equation}
When $d = 1$, $\Omega = [-x_0,x_0]$, and $x > x_0$, we have 
\begin{equation} \label{eq:greenform1dr}
  i u(x,t) = \slp_A \left[ \pd{u}{x} - i A u
  \right](x,t) + \dlp_A[u](x,t) 
\end{equation}
and
\begin{equation} \label{eq:green1dr}
  \frac{i}{2} u(x_0,t) =  \slp_A \left[ \pd{u}{x} - i
  A u \right](x_0,t) + \dlp^\ast_A[u](t).
\end{equation}
For $x < -x_0$, we have
\begin{equation} \label{eq:greenform1dl}
  -i u(x,t) = \slp_A \left[ \pd{u}{x} - i A u
  \right](x,t) + \dlp_A[u](x,t)
\end{equation}
and
\begin{equation} \label{eq:green1dl}
  -\frac{i}{2} u(-x_0,t) = \slp_A \left[ \pd{u}{x} - i
  A u \right](-x_0,t) + \dlp^\ast_A[u](t),
\end{equation}
with $x_0$ replaced by $-x_0$ in the definitions of $\slp_A$ and
$\dlp_A$. Here $u$, $\pd{u}{\nu}$, and $\pd{u}{x}$ refer to their
appropriate spatial boundary traces when they are used as arguments to
$\slp_A$, $\dlp_A$, and $\dlp^\ast_A$, and $\nu$ is an outward normal on
$\partial \Omega$.
\end{theorem}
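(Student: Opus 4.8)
\medskip
\noindent\textbf{Proof sketch.}
The plan is to obtain all six identities from a single application of Green's second identity for the exterior Schr\"odinger operator with vector potential, $L_A := i\,\partial_t + \Delta - i\,\bA(t)\cdot\nabla$. The hypothesis --- that $u$ solves the homogeneous equation in the exterior with vanishing initial trace --- means $L_A u = 0$ in $\RR^d\setminus\overline\Omega$ and $u(\cdot,0)=0$ there; this is \eqref{eq:schrodand}, equivalently, after the shift $u(\cdot,t)\mapsto u(\cdot-\varphi(t),t)$, the free-particle equation \eqref{eq:freeparticle}, and when $\bA\equiv0$ the two coincide and every subscript $A$ below disappears. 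The right companion is the fundamental solution $K_A$ of \eqref{eq:greenfuna}: one checks directly that, for fixed $(x,t)$, the function $G(y,s):=K_A(x-y,t,s)$ solves the formal adjoint equation $L_A^\ast G = 0$ in $(y,s)$ for $s<t$, where $L_A^\ast = -i\,\partial_s + \Delta + i\,\bA(s)\cdot\nabla$, and $G(y,s)\to\delta(x-y)$ as $s\to t^-$; the sign change in the advection term and the absence of any zeroth-order term both rely on $\bA$ being spatially uniform, so that $\nabla\cdot\bA = 0$. (Equivalently, one could first pass to $w(y,s):=u(y-\varphi(s),s)$, which solves the free-particle equation on a time-dependent exterior region, and argue with $K$; I would take the direct route to avoid a moving boundary.)

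First I would write, for a ball $B_R\supset\overline\Omega$ and a small $\epsilon>0$,
\begin{equation*}
0 \;=\; \int_0^{t-\epsilon}\!\int_{B_R\setminus\overline\Omega}\paren{G\,L_A u \,-\, u\,L_A^\ast G}\,dy\,ds ,
\end{equation*}
and integrate by parts: once in $s$ through $i\,\partial_t$, and in space through $\Delta$ and $-i\,\bA\cdot\nabla$. Since $G(y,s)$ is smooth for $s<t$, no spatial excision near $y=x$ is needed; the only singular layer is at $s=t$, which is why the time integral is truncated at $t-\epsilon$. The integration by parts leaves four boundary contributions: (i) a term at $s=0$, which vanishes because the initial trace is zero; (ii) a term at $s=t-\epsilon$ which, as $\epsilon\to0^+$, converges to $i\,u(x,t)$ for $x$ in the open exterior --- the coefficient $i$ being exactly the coefficient of $u_t$ in $L_A$; (iii) a term on the sphere $|y|=R$; and (iv) spatial boundary terms on $\partial\Omega$. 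Integrating $-i\,\bA\cdot\nabla u$ by parts in space produces a boundary flux proportional to $\nu\cdot\bA\,u$ which combines with the flux $\pd{u}{\nu}$ from $\Delta$ into precisely the single-layer density $\pd{u}{\nu} - i\,\nu\cdot\bA\,u$, while the remaining $\partial\Omega$ term is the double layer; the apparent sign differences between \eqref{eq:greenform} on the one hand and \eqref{eq:greenform1dr}, \eqref{eq:greenform1dl} on the other (as well as the sign of $i\,u(x,t)$ on the two one-dimensional faces) are bookkeeping consequences of the orientation of the exterior boundary, whose outward normal is $-\nu$, together with the differing sign conventions in the definitions \eqref{eq:dlp1d} and \eqref{eq:dlpnd}.

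Sending $R\to\infty$ and showing that contribution (iii) vanishes yields the interior representations \eqref{eq:greenform}, \eqref{eq:greenform1dr}, \eqref{eq:greenform1dl}. This is the step I expect to be the main obstacle: since $|K_A(x-y,t,s)| = (4\pi|t-s|)^{-d/2}$ does not decay in $y$, the required decay of the integrand on $|y|=R$ must be drawn from the hypothesis $|u(y,s)|\to0$ (and $|\nabla u(y,s)|\to0$) as $|y|\to\infty$, which has to overcome the $\OO(R^{d-1})$ growth of the sphere's area --- or, more sharply, from the oscillation of $K_A$ through a non-stationary-phase estimate; I would make the decay assumption explicit and appeal to the analogous treatment of the parabolic case \cite{pogorzelski}. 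The boundary identities \eqref{eq:green}, \eqref{eq:green1dr}, \eqref{eq:green1dl} then follow from the interior ones by letting $x\to x_0\in\partial\Omega$ from the exterior side and invoking the double-layer jump relation \eqref{eq:kjump} for $d>1$ (respectively \eqref{kajump1d} for $d=1$): $\slp_A$ is continuous, $\dlp_A[u]$ is replaced by its principal value $\dlp_A^\ast[u]$ together with a $\pm\tfrac{i}{2}u(x_0,t)$ jump, and combining that jump with the $i\,u(x_0,t)$ produced in the first part collapses the coefficient to $\tfrac{i}{2}$. In one dimension the principal-value double layer is present at all only because $\varphi(t)\neq\varphi(s)$ makes $\pd{K_A}{x}$ nonvanishing at $x=x_0$, which is exactly the origin of the explicit formula \eqref{eq:dlp1dast}; when $\bA\equiv0$ it vanishes and one recovers the classical one-dimensional Dirichlet-to-Neumann form of \cite{baskakov91}.
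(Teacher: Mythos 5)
Your proposal is correct and follows essentially the same route as the paper: integrating the exterior solution against $K_A(x-y,t,s)$, moving all derivatives onto the kernel (your adjoint identity $L_A^\ast G=0$ is exactly what the paper's cancellation of the $T_1,T_2,T_3$ volume terms amounts to), keeping the $\partial\Omega$ flux terms as the single and double layers, and then passing to the boundary via the jump relations. The only difference is that you are more explicit than the paper about the two limiting procedures (truncating at $s=t-\epsilon$ and sending $R\to\infty$ with a decay hypothesis on $u$ at infinity), which the paper's proof leaves implicit.
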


\begin{proof}

When $\bA \equiv 0$, or $A \equiv 0$, \eqref{eq:greenform},
\eqref{eq:greenform1dr}, and \eqref{eq:greenform1dl} are Green's
identities for the ordinary free-particle \Schrod equation. These are
well-known in the closely-related case of the heat equation
\cite{guentherlee1988}. Our proof follows the standard derivation of
this Green's identity, modified to include the vector potential term. We
only consider the case $d > 1$ here; the proof when $d = 1$ is similar. 

Let $s \in
[0,t]$ and $x,y \in \RR^d \setminus \overline{\Omega}$. Integrating \eqref{eq:schrodand} against
$K_A(x-y,t,s)$ on $[0,t] \times \RR^d \setminus \overline{\Omega}$ gives
\[0 = \int_0^t \int_{\RR^d \setminus \overline{\Omega}} K_A(x-y,t,s) \,
  \brak{i u_s(y,s) +
  \Delta u(y,s) - i \bA(s) \cdot
\nabla u(y,s)} \, dV(y) \, ds = T_1 + T_2 + T_3,\]
where we have split the expression into three terms. Integrating $T_1$ by parts in $s$ gives
\[T_1 = -\int_0^t \int_{\RR^d \setminus \overline{\Omega}} i \partial_s K_A(x-y,t,s) \, u(y,s) \,
  dV(y) \, ds + \int_{\RR^d \setminus \overline{\Omega}} i
\brak{K_A(x-y,t,s) \, u(y,s)}_{s=0}^t \, dV(y) .\]
In the first integral, we can compute $\partial_s K_A(x-y,t,s)$ using
\eqref{eq:greenfuna}. In the second integral, the boundary term at $s = 0$ vanishes since $u(x,0) = 0$ in $\RR^d
\setminus \overline{\Omega}$. For the boundary term at $s = t$, we use
the $\delta$-function property of the Green's function. We obtain
\[T_1 = \int_0^t \int_{\RR^d \setminus \overline{\Omega}} \brak{i \,
\pd{K}{t}(\star,t-s) + i \, \bA(s) \cdot \nabla
K(\star,t-s)} \, u(y,s) \, dV(y) \, ds + \, i \, u(x,t),\]
where we have used the symbol $\star := x - y + \varphi(t) - \varphi(s)$.
For $T_2$, we use Green's second identity and \eqref{eq:greenfuna} to
obtain
\[T_2 = \int_0^t \int_{\RR^d \setminus \overline{\Omega}} \Delta K(\star,t,s) \, u(y,s) \,
   dV(y) \, ds + \int_0^t \int_{\partial \Omega} \pd{K_A}{\nu_y}(x-y,t,s) \, u(y,s) -
   K_A(x-y,t,s) \, \pd{u}{\nu}(y,s) \, dS(y) \, ds.\]
For $T_3$, the divergence theorem gives
\[T_3 = -\int_0^t \int_{\RR^d \setminus \overline{\Omega}}  i \,
  \bA(s) \cdot \nabla K(\star,t,s) \,  u(y,s) \,dV(y) \, ds + \int_0^t \int_{\partial
  \Omega} i \, \nu_y \cdot \bA(s) \, K_A(x-y,t,s) \, u(y,s) \, dS(y) \,
ds.\]
We now add $T_1$, $T_2$, and $T_3$, and use the fact that $K$ satisfies the free-particle
\Schrod equation \eqref{eq:freeparticle}. After some algebra, we
find
\[0 = i \, u(x,t) + \int_0^t \int_{\partial
  \Omega} \pd{K_A}{\nu_y}(x-y,t,s) \, u(y,s) - K_A(x-y,t,s) \,
  \paren{\pd{u}{\nu}(y,s) - i \, \nu_y
\cdot \bA(s) \, u(y,s)}\, dS(y) \, ds\]
which is \eqref{eq:greenform}. \eqref{eq:green} follows from the jump
condition \eqref{eq:kjump} and the continuity of the single layer
potential by taking the limit as $x$ approaches a smooth boundary point $x_0$.
\end{proof}

Since we have assumed that $u_0$ and $V$ are supported inside $\Omega$,
the identities \eqref{eq:green}, \eqref{eq:green1dr}, and
\eqref{eq:green1dl} hold for \eqref{eq:schrodfree} and are exact 
TBCs. To borrow terminology
from \cite{baskakov91}, these may be viewed as generalized Robin
conditions, relating the Dirichlet and Neumann data at a time $t$ to
those throughout the time history $0 < s < t$. TBCs for the \Schrod
equation in arbitrary piecewise-smooth domains
with $\bA(t) = 0$ are obtained as a special case. Furthermore,
\eqref{eq:greenform}, \eqref{eq:greenform1dr}, and
\eqref{eq:greenform1dl} serve as representation formulas for $u$ outside of
$\Omega$, and may be evaluated there if needed.

\begin{remark}
When $d = 1$ and $A(t) = 0$, $\dlp_A^\ast \equiv \dlp^\ast$ vanishes and
\eqref{eq:green1dr} may be written in the form of the standard
Neumann-to-Dirichlet (NtD) map:
\begin{equation} \label{eq:n2dclassical}
u(x_0,t) =  \frac{e^{-3 \pi i/4}}{\sqrt{\pi}} \int_0^t
  \frac{u_x(x_0,s)}{\sqrt{t-s}} \, ds.
\end{equation}
The Dirichlet-to-Neumann (DtN) map may be recovered by viewing
\eqref{eq:n2dclassical} as an Abel integral equation \cite{pogorzelski}
and solving it explicitly. It is given by
\begin{equation} \label{eq:d2nclassical}
  \pd{u}{x}(x_0,t) = \frac{e^{3 \pi i/4}}{\sqrt{\pi}} \int_0^t
  \frac{u_t(x_0,s)}{\sqrt{t-s}} \, ds.
\end{equation}
Therefore the TBCs above generalize the known DtN and
NtD maps, which are derived, for example, in \cite{baskakov91,jiang04}
by other means. The conditions for the higher-dimensional cases and/or 
when $\bA(t) \neq 0$ may similarly
be written explicitly as DtN or NtD maps involving the inversion of 
Volterra integral operators. However, as we show in the next section,
one can simply discretize the conditions as written to obtain ordinary 
Robin conditions.

\end{remark}

\section{Implementation of the TBCs in one dimension}
\label{sec:implementation}

To see how the TBCs presented in the previous section may be
used in practice, we describe a simple second-order accurate
scheme for the case $d=1$. The method presented
here may be extended to a higher-order discretization in time, and is
independent of the choice of spatial discretization in the interior of
the domain.

For the right boundary $x = x_0$ of the computational domain, we must discretize
\eqref{eq:green1dr}. We subdivide $[0,T]$ into $N$ equispaced time
intervals, and define $t_n = n \Delta t$ with $\Delta t = T/N$. To achieve
second-order accuracy, we use a piecewise linear approximation of the solution.
In this approximation a function of interest $f(t)$ may be written as
\[f(t) = \sum_{n=1}^N f_n \eta_n(t)\]
with $f_n = f(t_n)$, where $\{ \eta_1,\ldots,\eta_N \}$ is the 
standard basis of ``hat" functions,
\[\eta_n(t) = \max(1-|t-t_n|/\Delta t,0).\]
Note that $\eta_n(t)$ is supported in $[t_{n-1},t_{n+1}]$.
Approximating $u$ and $\pd{u}{x}$ in this way, we obtain a
discretization of \eqref{eq:green1dr},
\begin{equation}
\label{green:discrete}
\paren{\frac{i}{2} I_N + i S_N A_N - D_N} {\bf u}_N - S_N {\bf v}_N = 0. 
\end{equation}
Here, $I_N$ is the $n \times n$ identity matrix, $A_N$ is a diagonal
matrix with $A_N(n,n) = A(t_n)$, 
\[ {\bf u}_N = [u_1,u_2,\dots,u_N] \approx
[u(t_1),u(t_2),\dots,u(t_N)], \]
and
\[ {\bf v}_N = [v_1,v_2,\dots,v_N] \approx
\left[ \pd{u}{x}(t_1),\pd{u}{x}(t_2),\dots,\pd{u}{x}(t_N) \right].  \]
$S_N$, $D_N$ are dense lower triangular matrices with 
\begin{align}
S_N(m,n) &:= 
\frac{1}{\sqrt{4 \pi i}} \int_0^{t_m} \frac{\exp\paren{i \,
(\varphi(t_m) - \varphi(s))^2 / 4 (t_m-s)}}{\sqrt{t_m-s}} \, \eta_n(s) \, ds \\
&=
\frac{1}{\sqrt{4 \pi i}} \int_{\max(0,t_{n-1})}^{\min(t_m,t_{n+1})} 
\frac{\exp\paren{i \,
(\varphi(t_m) - \varphi(s))^2 / 4 (t_m-s)}}{\sqrt{t_m-s}} \, \eta_n(s) \, ds, 
\end{align}
and
\begin{align}
  D_N(m,n) &:= \frac{\sqrt{i}}{4\sqrt{\pi}} \int_0^{t_m}
  \frac{\varphi(t_m)-\varphi(s)}{(t_m-s)^{3/2}}
  \exp\paren{i \, \frac{(\varphi(t_m)-\varphi(s))^2}{4(t_m-s)}} \, \eta_n(s) \, ds \\
 &= \frac{\sqrt{i}}{4\sqrt{\pi}} \int_{\max(0,t_{n-1})}^{\min(t_m,t_{n+1})}
  \frac{\varphi(t_m)-\varphi(s)}{(t_m-s)^{3/2}}
  \exp\paren{i \, \frac{(\varphi(t_m)-\varphi(s))^2}{4(t_m-s)}} \, \eta_n(s) \, ds. 
\end{align}
Note that the matrix entries are local in time and can be precomputed
using any suitable quadrature scheme. Because of the lower triangular structure
of these matrices (a consequence of the fact that the integral operators are of
Volterra type), we may write the discrete Green's identity
\eqref{green:discrete} as
\begin{align} \label{eq:sigmam}
  \paren{\frac{i}{2} + i \, S_N(m,m) \, A_m - D_N(m,m)} u_m &- S_N(m,m)
  \, v_m =  \nonumber \\
& \sum_{n=1}^{m-1} \brak{D_N(m,n) \, u_n + S_N(m,n) \, (v_n -
  i \, A_n \, u_n)},
\end{align}
for $m = 1,\ldots,N$. These are inhomogeneous Robin boundary conditions
for each time step $t_m$, with the right-hand side involving boundary data from previous time
steps.

\subsection{Butterfly scheme for the rapid evaluation of the TBCs} \label{fastalg}

The cost of computing the Robin coefficients in \eqref{eq:sigmam} naively is
of the order $\OO(m)$ at the $m$th time step, and $\OO(N^2)$ in total. Thus, 
without a fast algorithm to apply $S_N$ and $D_N$, computing the boundary
conditions will dominate the asymptotic cost of a simulation in the limit of many time steps.

To develop a fast algorithm, we
first partition the matrices $S_N$ and $D_N$ into blocks as in Figure
\ref{fig:triblk}, refining blocks towards the diagonal until the dimension of the
smallest blocks is a small constant. Rather than applying one row of the
matrix per time step, as suggested by the right hand side of
\eqref{eq:sigmam}, we can apply each block as soon as the corresponding
entries of ${\bf u}_N$ and ${\bf v}_N$ become available in the course of
time-stepping. The order in which the blocks may be applied is indicated by the
numbering in the figure. Each row of the triangular sections near the matrix
diagonal (marked in the figure by asterisks) contains at most a constant number of elements, and may be
built and applied directly. The results of these matrix-vector and
row-vector products may then be arranged and added together as needed to
compute the right hand side of \eqref{eq:sigmam}. All that is required to overcome
the $\OO(N^2)$ cost is a suitable method to apply each of the square blocks efficiently.

\begin{figure}[ht]
  \centering
  \includegraphics[width=0.4\textwidth]{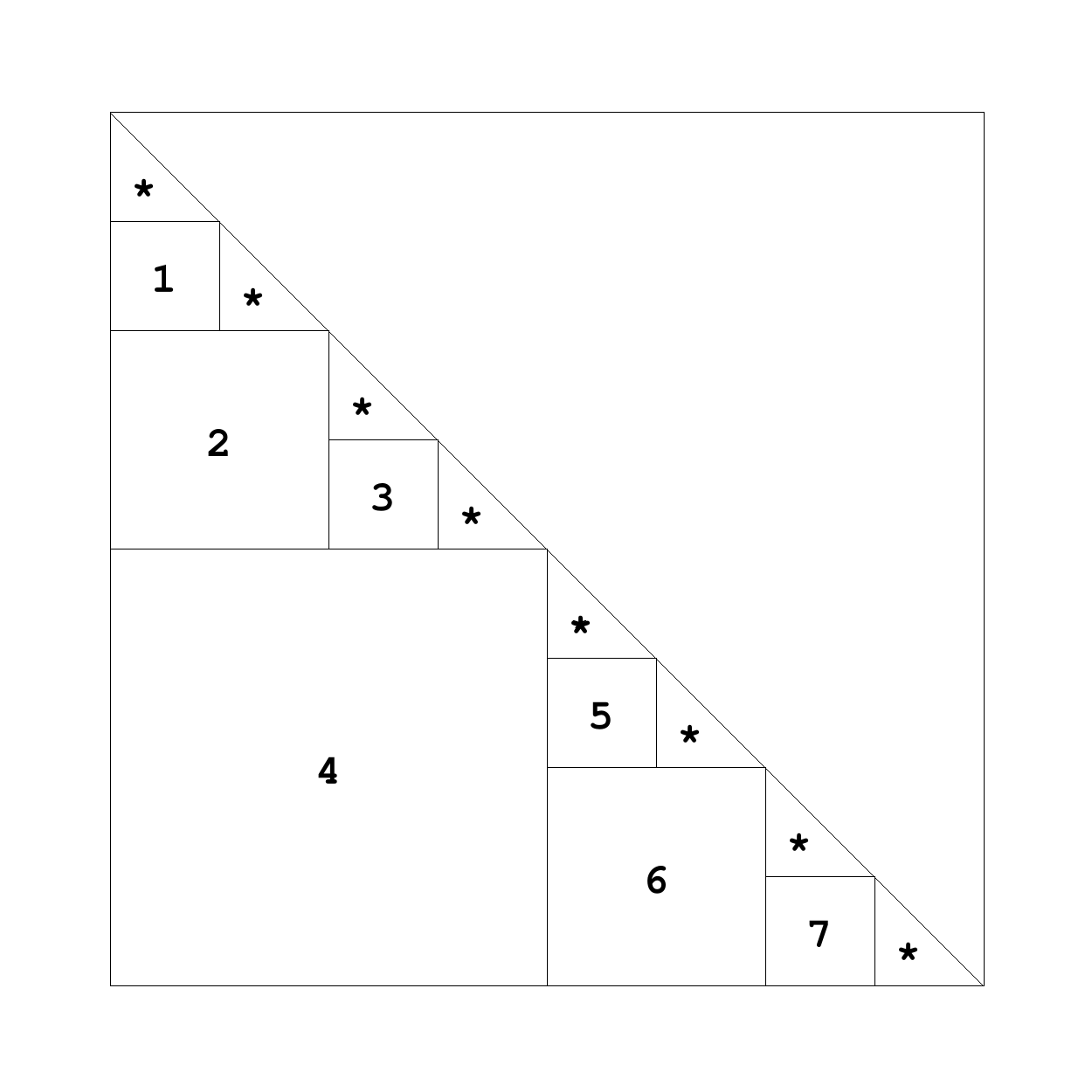}

  \caption{Block partitioning of the matrices $S_N$ and $D_N$.
  Numbered blocks are butterfly compressed in advance, and applied in the order indicated. Triangular sections
marked by $\star$ are applied directly.}
  \label{fig:triblk}
\end{figure}

If the integral operators were convolutional, the fast Fourier transform (FFT) could be used to 
apply each block in nearly optimal time, as in \cite{hairer85}.
In the present case, we replace the FFT with 
hierarchical {\em butterfly compression}
\cite{michielssen,oneil10}, which is related to the FFT and applies to
more general matrices.
The only property such matrices require is the 
so-called ``butterfly property'' - that the rank of a submatrix is
proportional to its area. Butterfly algorithms
have been successfully used to compress and
apply matrices arising from a variety of special function transforms and
oscillatory integral operators. 
We will not review the literature here, and instead refer the reader
to \cite{oneil10,candes09,hoying}. Since the matrices $S_N$ and $D_N$ are
discretizations of an oscillatory integral operator with a smooth phase
function, the butterfly algorithm should apply in its standard form.

For each $n \times n$ matrix, the algorithm of \cite{oneil10} begins with an
$\OO(n^2)$ precomputation step, in which the matrix is compressed as a sparse
factorization with only $\OO(n \log n)$ nonzero elements. Afterward, the matrix
may be applied at a cost of $\OO(n \log n)$. We use this algorithm for
each block. Summing the costs from large to small blocks, we find the
total precomputation cost is
\[\frac{N}{2} + 2 \paren{\frac{N}{4}}^2 + 4
\paren{\frac{N}{8}}^2 + \cdots \sim \OO(N^2).\]
The total matrix apply and memory storage costs are
\[\frac{N}{2} \log \frac{N}{2} + 2 \paren{\frac{N}{4} \log
  \frac{N}{4}} + 4 \paren{\frac{N}{8} \log \frac{N}{8}} + \cdots \sim \OO(N
\log^2 N),\]
since there are $\OO(\log N)$ terms in the sum. The
precomputation for $S_N$ and $D_N$ must be performed once for
each choice of $A(t)$, $\Delta t$, and $T$, but does not depend on $V$,
$u_0$, $x_0$, or any spatial discretization parameters. The scheme is
therefore  particularly efficient when one wishes, for example, to fix $A(t)$ and
solve the \Schrod equation for multiple initial conditions $u_0$ and
potentials $V$.

The butterfly ``compressibility" of various classes of
matrices is an area of ongoing research, and we will not
undertake a theoretical analysis of the rank properties of 
$S_N$ and $D_N$ here. The reader will find discussions of
this topic for several classes of matrices in the aforementioned
references. All of our numerical experiments indicate that $S_N$ and
$D_N$ are butterfly compressible even for large-amplitude vector potentials,
and that the algorithm performs efficiently and with the expected scaling, as
will be demonstrated in the following section.

\section{Numerical examples}\label{numres}

To demonstrate the effectiveness of our transparent boundary
condition in the one-dimensional case with $A(t) \neq 0$, we discretize
\eqref{eq:schrodbvp}, with $\Omega = [-x_0,x_0]$, by a
Crank-Nicolson scheme coupled to the Robin condition \eqref{eq:sigmam}.
The resulting method is second-order accurate in the time step $\Delta
t$ and the grid spacing $\Delta x$.

\subsection{Example 1: Gaussian wavepacket with \texorpdfstring{$V =
0$}{V = 0}}\label{sec:numex1}

For our first example, we set $x_0 = 1$, $V = 0$, and take $u_0$
to be a Gaussian wavepacket,
\begin{equation} \label{eq:gauswave}
  u_0(x) = \frac{1}{\sqrt{\alpha}} e^{ik(x-\mu)} e^{-(x-\mu)^2/4
  \alpha^2},
\end{equation}
with $\alpha = 0.08$, $k = -10$, and $\mu = 0$, so that the support of $u_0$ is
contained in $\Omega$ to at least fourteen digits of accuracy. 
The potential $A(t)$ is taken to be a pulse
\begin{equation} \label{eq:afield}
  A(t) = A_0 \sin^2(t \pi/T) \cos(\omega t)
\end{equation}
with $A_0 = 3000$, $\omega = 300$, and $T = 0.1$. The indefinite
integral $\varphi$ may be computed analytically. The wavepacket is
advected approximately $\max |\varphi| \approx 10$ domain
radii from the origin at its maximal
excursion, sweeping it back and forth across the domain in several cycles.
Figure \ref{fig:ex1images} shows the absolute value of a numerical solution obtained
using the TBCs, along with $A(t)$. A video of the solution is available
at the webpage:
\url{https://cims.nyu.edu/~kaye/kg_tbcse1_ex.html}.

\begin{figure}[ht]

  \hspace*{\fill}
  \includegraphics[height=10cm]{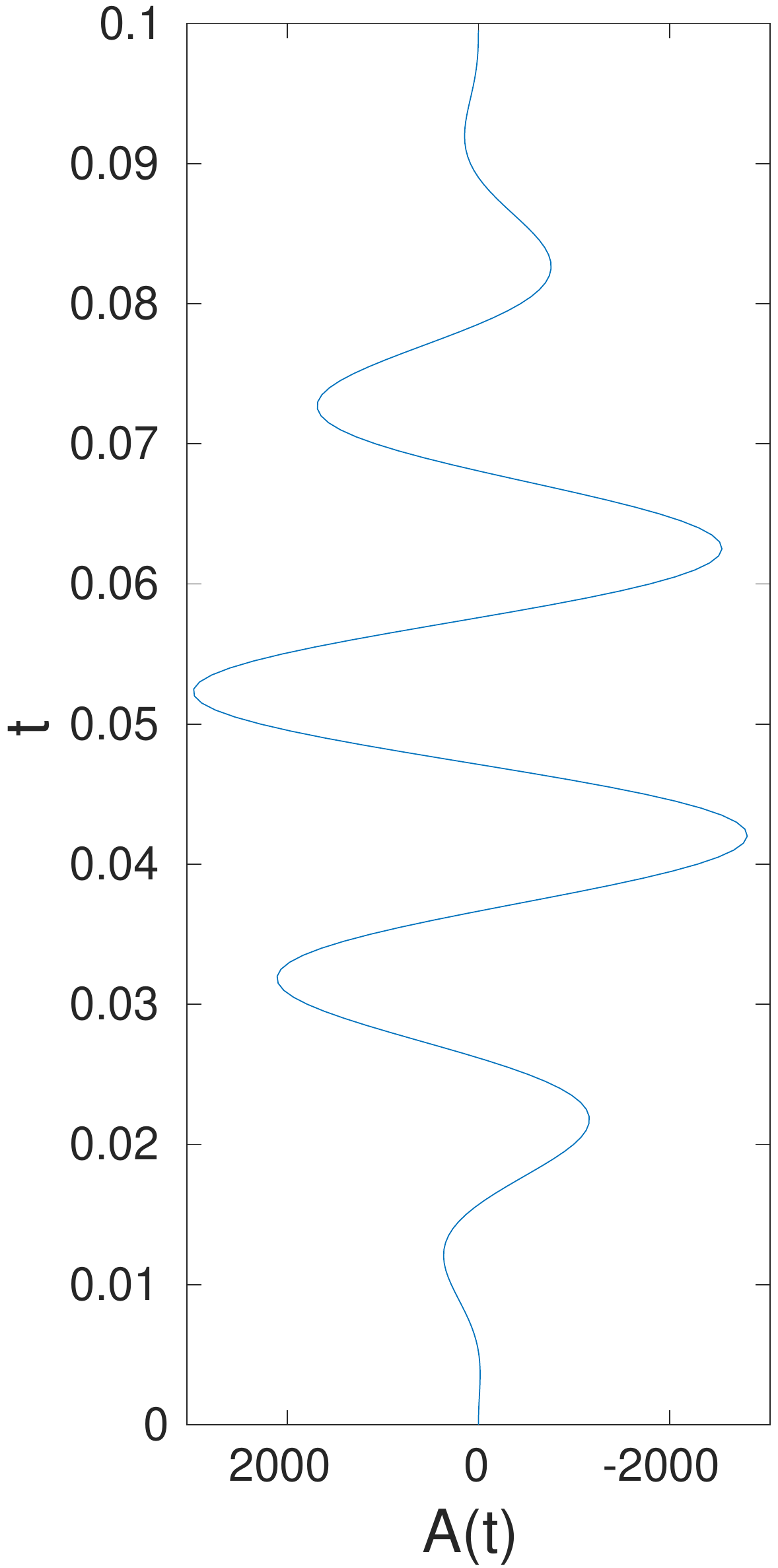}
  \includegraphics[height=10cm]{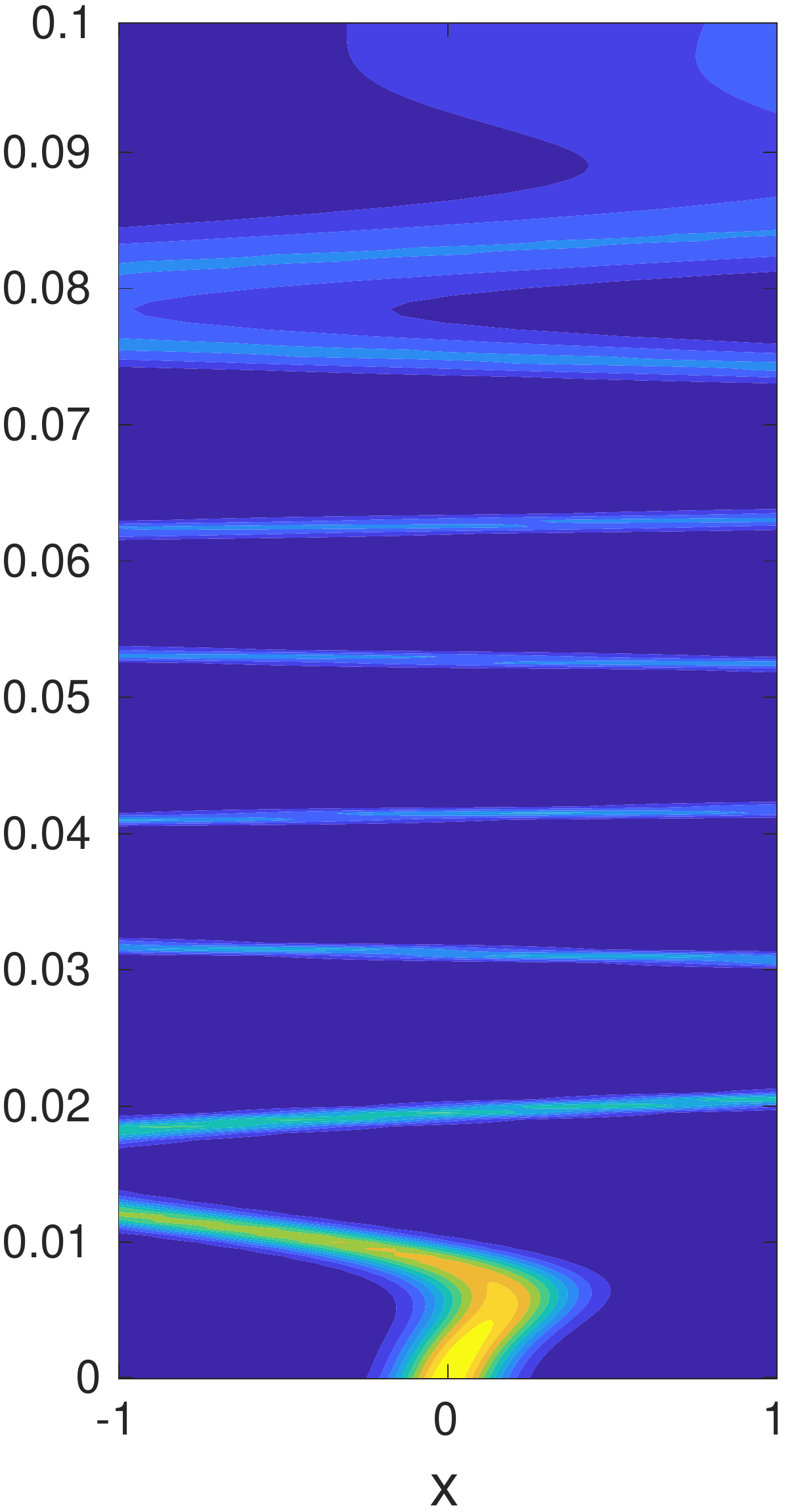}
  \hspace*{\fill}

  \caption{Plots of $A(t)$ and $|u(x,t)|$ for Example 1, with time on the vertical axis. The
  horizontal axis has been reversed in the first plot so that the sign
  of $A(t)$ lines up with the direction of advection. As the
wavepacket disperses, it is pushed rapidly back and forth
across the domain by the applied field, causing it to leave and return repeatedly.
At its maximal excursion, the center of the wavepacket is approximately
10 domain radii from the origin.}
  \label{fig:ex1images}
\end{figure}

Since the solution of the free-particle \Schrod equation with Gaussian
wavepacket initial data is known analytically, the solution for our
case may be obtained by shifting it by $\varphi$. We can therefore
measure the error directly. 
We first demonstrate that the entire discretization scheme is second-order in $\Delta
t$. The scheme is also second-order in $\Delta x$, but we focus on the
convergence in $\Delta t$ since this includes the discretization error of the boundary
condition. We fix $\Delta x$ sufficiently small so that the influence
of the spatial discretization is eliminated, and compute the numerical
solution for several choices of $\Delta t$. For each one, we compute the
maximum $L^2$ error on $[-1,1]$ over the duration of the simulation. The results,
given in Table \ref{tab:ex1errvsdt}, demonstrate the desired convergence
rate.

In order to isolate any discretization error caused by the boundary
condition, we solve the PDE using the Crank-Nicolson scheme on the
domain $[-20,20]$ with the Robin condition replaced by zero Dirichlet
boundary conditions. These are correct to machine
precision for this case. We use the same fixed $\Delta x$ as before, and again measure
the maximum $L^2$ error on $[-1,1]$ for the same choices of $\Delta t$.
The results, also given in Table \ref{tab:ex1errvsdt}, show that for the
same $\Delta t$, the TBC scheme is actually more accurate than the brute
force scheme.

\begin{table}[ht]
  \centering
  \begin{tabular}{|c|r|r|r|r|r|}
    \hline
    $\Delta t$ & $1 \times 10^{-5}$ & $5 \times 10^{-6}$ & $2.5 \times
    10^{-6}$ & $1.25 \times 10^{-6}$ & $6.25 \times 10^{-7}$ \\ \hline
    TBC & $2.62 \times 10^{-1}$ & $6.61 \times 10^{-2}$ &
    $1.67 \times 10^{-2}$ & $4.21 \times 10^{-3}$ & $1.09 \times
    10^{-3}$ \\ \hline
    DBC & $7.29 \times 10^{-1}$ & $2.12 \times 10^{-1}$ &
    $5.43 \times 10^{-2}$ & $1.37 \times 10^{-2}$ & $3.49 \times 10^{-3}$ \\
    \hline
  \end{tabular}
  \caption{Maximum $L^2$ error of the Gaussian wavepacket solution on
  $[-1,1]$ for recursively halved values of $\Delta t$ and fixed
  $\Delta x = 2 \times 10^{-4}$, using the transparent boundary condition scheme
(TBC) as well as homogeneous Dirichlet boundary conditions on a much larger domain
(DBC).}
  \label{tab:ex1errvsdt}
\end{table}

We next measure the total time required to compute the TBCs, first by
applying rows of $S_N$ and $D_N$ directly, and then by using the
butterfly scheme described in Section \ref{fastalg}. The matrices $S_N$
and $D_N$ are precomputed and butterfly compressed beforehand.  Table
\ref{tab:times} shows the time spent on finite difference marching and
the time spent computing boundary conditions.  The cost of obtaining
TBCs using the butterfly scheme appears to scale sublinearly with $N$
for the values tested, and represents a negligible part of the total
cost of the simulation. With the direct scheme, this cost scales like
$O(N^2)$, and eventually overtakes the cost of marching. 

\begin{table}[ht]
  \centering
  \begin{tabular}{|l|r|r|r|r|r|r|}
    \hline
    \multicolumn{1}{|c|}{$N = T/\Delta t$} & 10,000 & 20,000 & 40,000 & 80,000 & 160,000 
    \\ \hline
    Time for finite difference marching & 4.9 & 9.7 & 19.4 & 39.0 &
    78.2 \\ \hline
    Time to obtain TBCs (direct) & 0.42 & 1.68 & 6.81 & 28.02 &
    120.90 \\ \hline
    Time to obtain TBCs (butterfly) & 0.17 & 0.29 & 0.42 & 0.66 &
    1.25 \\ \hline
  \end{tabular}
  \caption{Wall clock timings, in seconds, for Crank-Nicolson 
marching, and for
    obtaining TBCs with and without the butterfly scheme.
Recursively doubled values of $N$ correspond to the values of $\Delta
t$ shown in Table \ref{tab:ex1errvsdt}.}
  \label{tab:times}
\end{table}

\begin{remark}
  As mentioned above, the timings in Table \ref{tab:times} do not include the
  construction and/or compression of $S_N$ and $D_N$. In the direct application
  scheme, the cost of building these matrices is $\OO(N^2)$. The memory
  required to store them is also $\OO(N^2)$, so for sufficiently large
  $N$ one must build them on the fly. The cost of this step cannot be
  shared over many simulations, and would be a significant addition to
  the costs reflected in the table.  In the butterfly scheme, the cost of building and compressing the matrices is
  $\OO(N^2)$, but the memory required to
  store them is only $\OO(N \log^2 N)$. Therefore the matrices may be
  built, compressed, and stored, once for each choice of $T$, $A(t)$,
  and $\Delta t$, thereby eliminating any online cost associated with matrix
  construction.
\end{remark}

\subsection{Examples 2-4: Gaussian wavepacket interacting with a repulsive potential}

Our second set of examples demonstrates the case $V \neq 0$, and shows that 
the same precomputed and compressed integral operators may be used 
for a series of simulations with several
different choices of $V$. We again take $u_0$ to be a Gaussian wavepacket
\eqref{eq:gauswave}, now with $\alpha = 0.08$, $k = 0$, and $\mu = -2$. We define $A(t)$ as in
\eqref{eq:afield}, with $A_0 = -220$, $\omega = 0$, and $T = 0.1$. We then let $V$ be a Gaussian centered at the origin:
\[V(x) = V_{\max} e^{-x^2/2\beta^2},\]
with $\beta = 0.1$. $V_{\max}$ is taken to be $4000$, $6000$, or 
$8000$. We choose $\Omega = [-3,3]$, so that $V$ and $u_0$ are zero to
machine precision outside of $\Omega$.

In each example, the Gaussian wavepacket is advected
to the right and interacts with the potential barrier
$V$. Some of the incident wavepacket is
reflected, and some transmitted, depending on $V_{\max}$. 
The reflected and transmitted waves then exit the domain
through the transparent boundaries. The absolute value of each solution
and the vector potential $A(t)$ are shown in Figure \ref{fig:ex2images}.
Videos of each solution are available at the URL mentioned in Section
\ref{sec:numex1} above.

\begin{figure}[ht]
  \hspace*{\fill}
  \includegraphics[height=7.5cm]{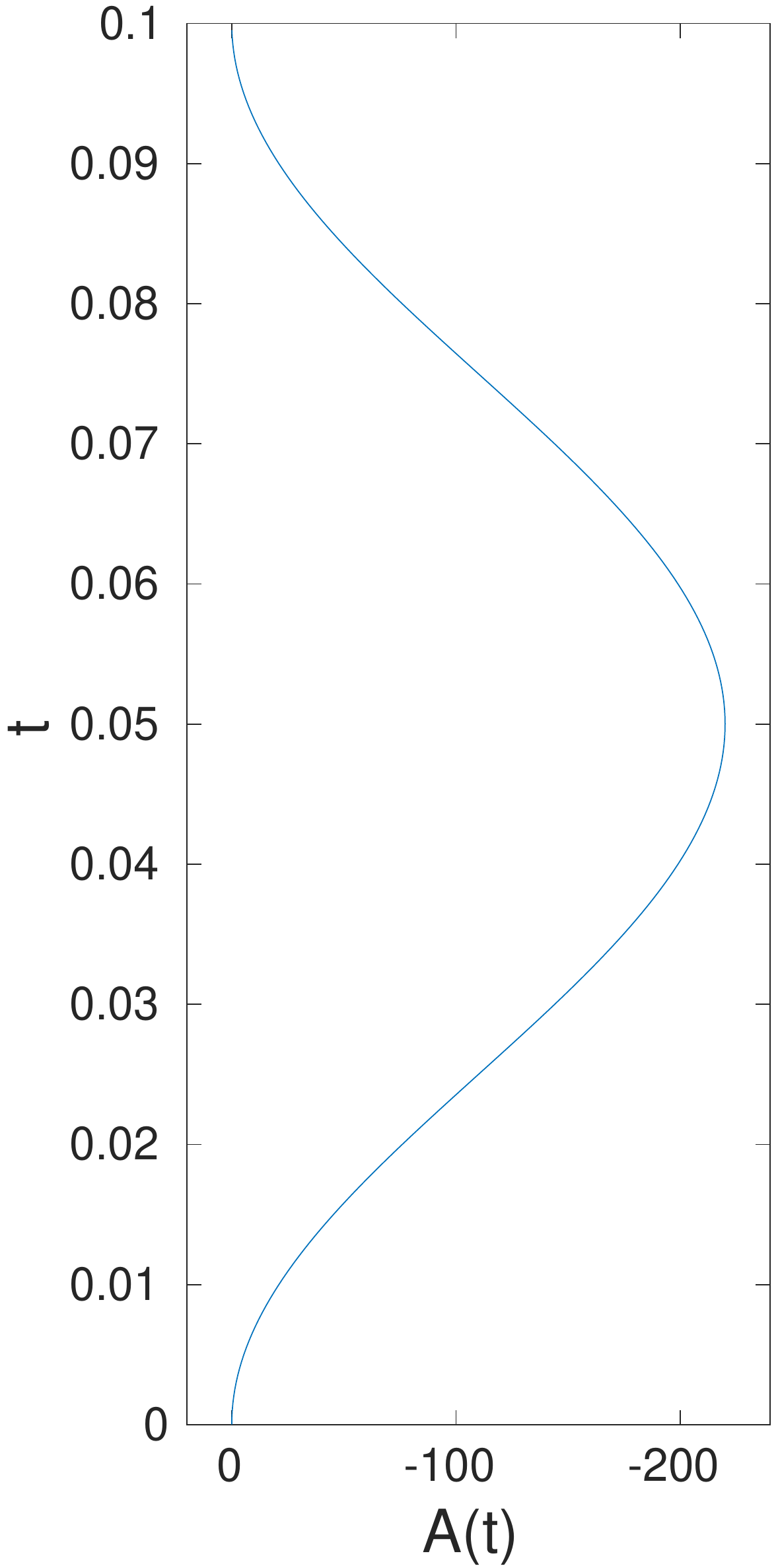}
  \includegraphics[height=7.5cm]{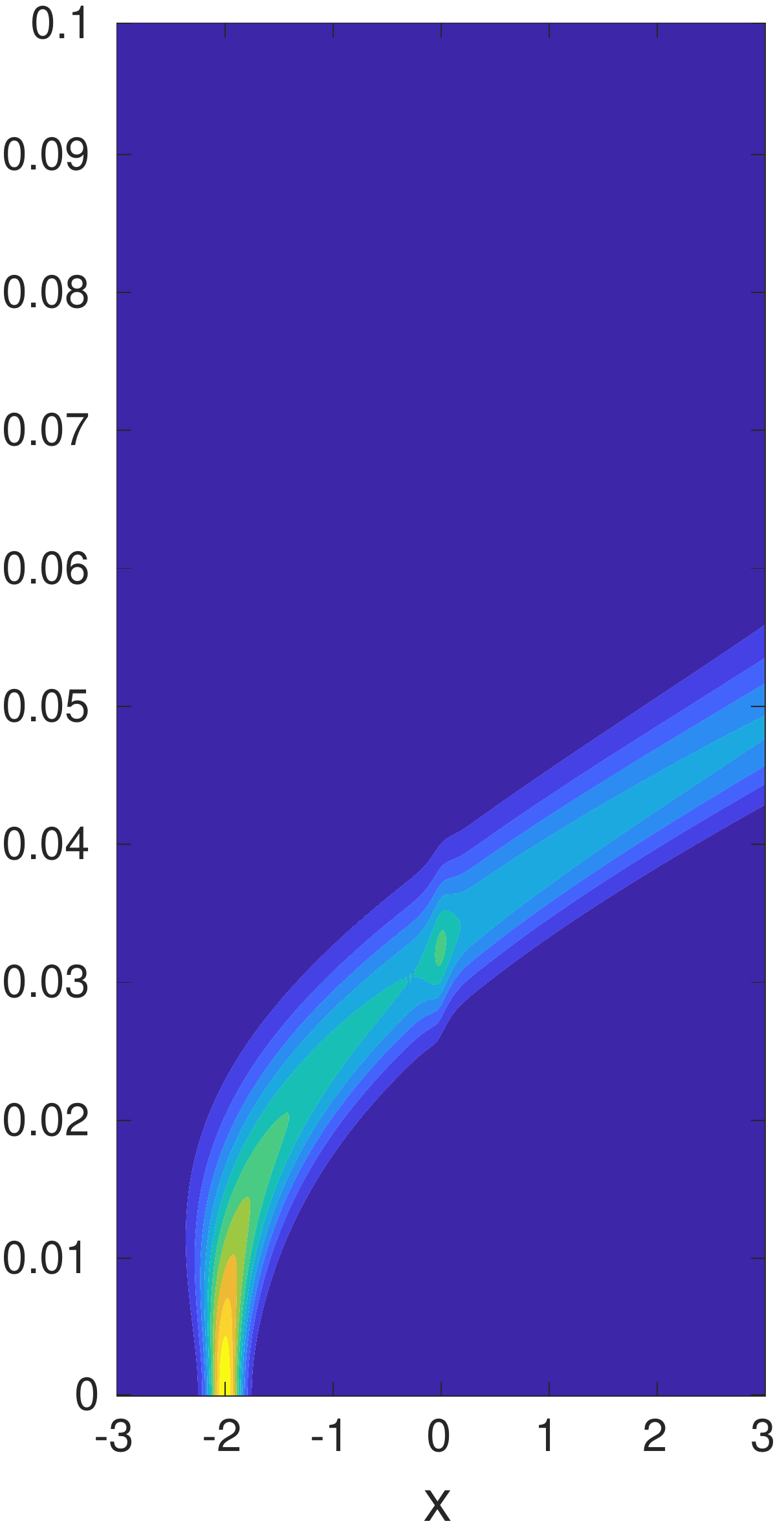}
  \includegraphics[height=7.5cm]{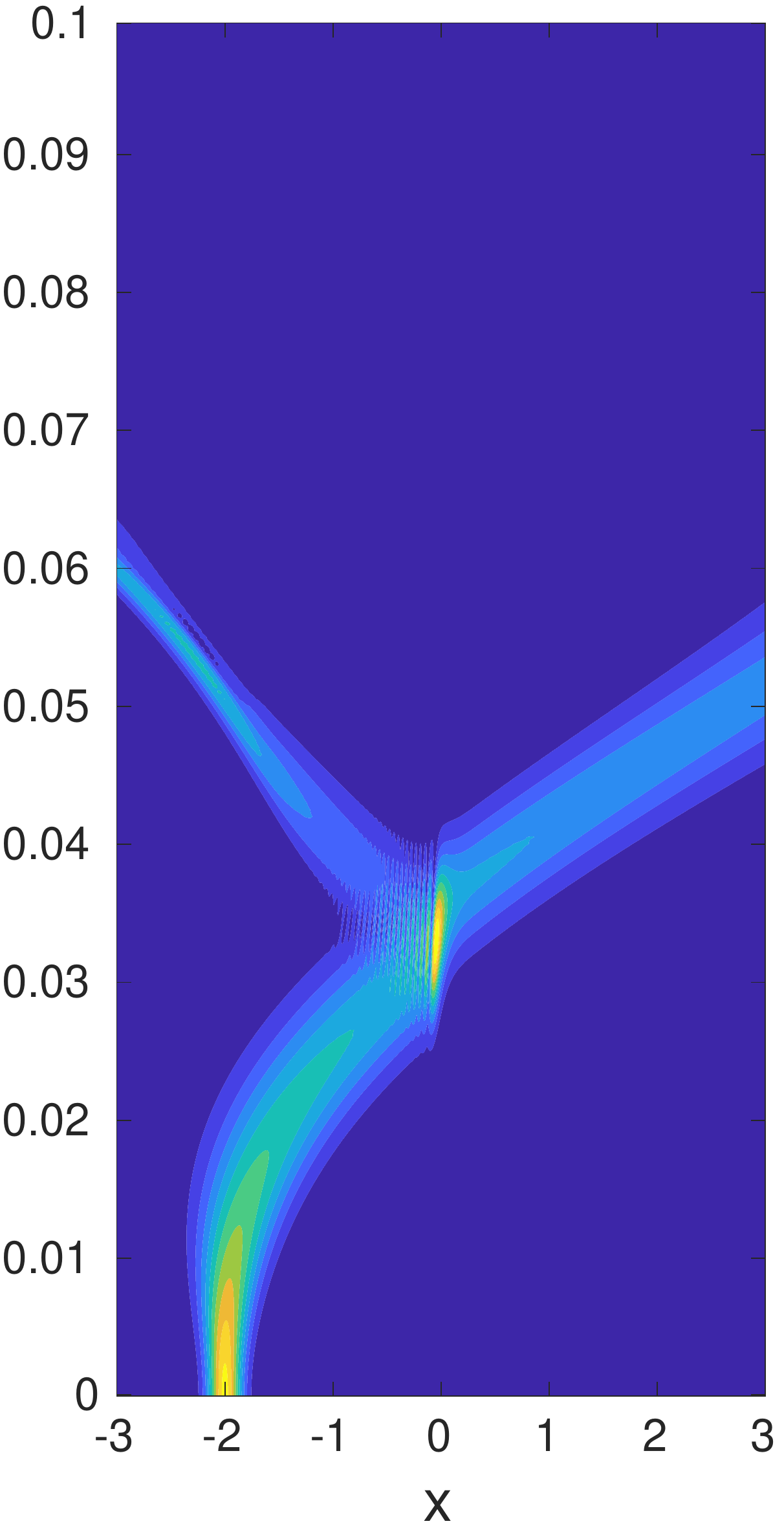}
  \includegraphics[height=7.5cm]{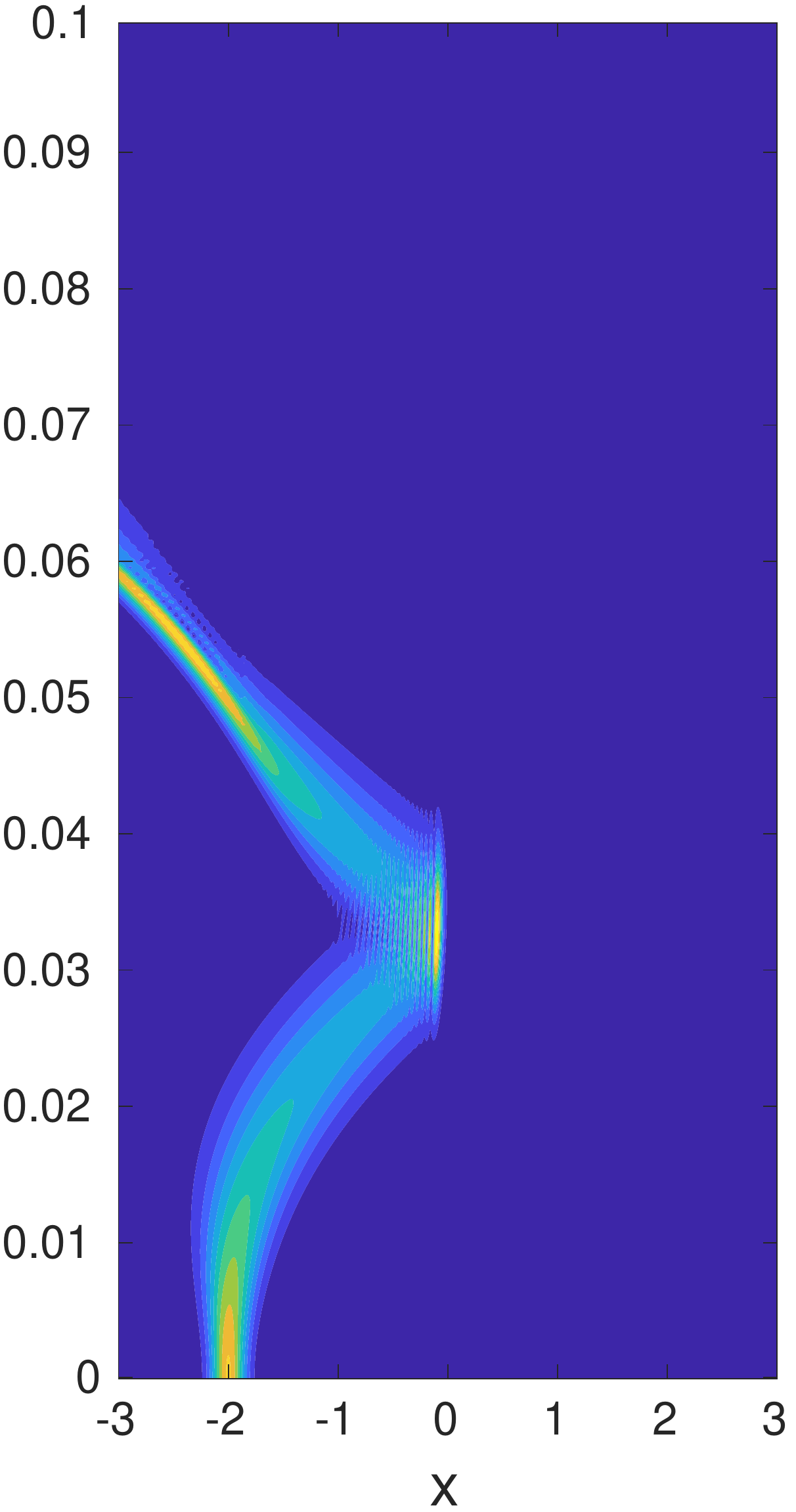}
  \hspace*{\fill}

  \caption{Plots of $A(t)$ and $|u(x,t)|$, displayed in
    the order $V_{\max} = 4000,6000,8000$, for Examples 2-4. The wavepacket
  is accelerated towards the potential barrier. Its
reflection coefficient depends on $V_{\max}$. After it interacts with the
barrier, the reflected and transmitted waves leave the domain.}
  \label{fig:ex2images}
\end{figure}

In this case, no analytical solution is available, so we measure the error
against highly accurate numerical solutions obtained using an
alternative discretization scheme, as follows. The
PDE is semidiscretized in time using the trapezoidal rule, and coupled
to the Robin condition \eqref{eq:sigmam}, with a very small time step
$\Delta t$. At each time step, this gives an elliptic two-point boundary
value problem with Robin conditions,
which we solve using a non-adaptive version of the high-order
scheme described in \cite{lee97}. We then sample this solution on the
Crank-Nicolson grid, and take it as our reference.

Before running any simulations, we compress the matrices $S_N$ and
$D_N$,
with several choices of $\Delta t$, for use in all three examples. We
repeat the convergence tests from Example 1 to demonstrate the
second-order accuracy of the overall scheme in $\Delta t$. The results
are shown in Table \ref{tab:ex2errvsdt}. As before, for each example, we
compare the TBC scheme to a Crank-Nicolson scheme with zero Dirichlet
boundary conditions on the domain $\Omega = [-25,25]$, which contains
the support of each solution to machine precision. The same
fixed $\Delta x$ is used in both cases. The errors are very similar for
both schemes. We speculate that the TBC scheme is more accurate than the brute
force scheme in Example 1 because the latter accumulates errors outside of
$\Omega$, which are then reintroduced once the sign of $A(t)$ reverses.
In Examples 2-4, the sign of $A(t)$ does not change, so the same
mechanism does not arise. This combination of results suggests that the
discretization error contributed by the TBC tends to be smaller than
that of the overall finite difference scheme.

\begin{table}[ht]
  \centering
  \begin{tabular}{|c|r|r|r|r|r|}
    \hline
    $\Delta t$ & $2 \times 10^{-5}$ & $1 \times 10^{-5}$ & $5 \times 10^{-6}$ & $2.5 \times
    10^{-6}$ & $1.25 \times 10^{-6}$ \\ \hline
    $V_{max} = 4000$, TBC & $6.56 \times 10^{-3}$ & $1.64 \times 10^{-3}$ &
    $4.11 \times 10^{-4}$ & $1.03 \times 10^{-4}$ & $2.58 \times
    10^{-5}$ \\ \hline
    $V_{max} = 4000$, DBC & $6.56 \times 10^{-3}$ & $1.64 \times 10^{-3}$ &
    $4.11 \times 10^{-4}$ & $1.03 \times 10^{-4}$ & $2.57 \times 10^{-5}$ \\ \hline
    $V_{max} = 6000$, TBC & $3.45 \times 10^{-1}$ & $8.77 \times 10^{-2}$ &
    $2.20 \times 10^{-2}$ & $5.60 \times 10^{-3}$ & $1.49 \times
    10^{-3}$ \\ \hline
    $V_{max} = 6000$, DBC & $3.45 \times 10^{-1}$ & $8.77 \times 10^{-2}$ &
    $2.20 \times 10^{-2}$ & $5.60 \times 10^{-3}$ & $1.49 \times 10^{-3}$ \\ \hline
    $V_{max} = 8000$, TBC & $6.45 \times 10^{-1}$ & $1.64 \times 10^{-1}$ &
    $ 4.12 \times 10^{-2}$ & $1.05 \times 10^{-2}$ & $2.80 \times
    10^{-3}$ \\ \hline
    $V_{max} = 8000$, DBC & $6.45 \times 10^{-1}$ & $1.64 \times 10^{-1}$ &
    $4.12 \times 10^{-2}$ & $1.05 \times 10^{-2}$ & $2.80 \times 10^{-3}$ \\
    \hline
  \end{tabular}
  \caption{Maximum $L^2$ errors on $[-3,3]$ versus $\Delta t$ for
    Examples 2-4. $\Delta x = 9.375 \times 10^{-6}$ is fixed throughout.}
  \label{tab:ex2errvsdt}
\end{table}

In Table \ref{tab:times2}, we give timings for Crank-Nicolson 
marching and for computing the TBCs using the butterfly scheme. Timings
are only shown for one of the three examples, since they are
all similar. The results demonstrate near-linear scaling of
the butterfly scheme. We note that in this case, 
the total time required to compute the
TBCs is significantly less than in the first example, remaining
under $0.02 \%$ of the total simulation cost for all choices of $N$. 
This is most likely a consequence of the simpler structure of $A(t)$, which 
leads to more compressible $S_N$ and $D_N$ matrices.

\begin{table}[ht]
  \centering
  \begin{tabular}{|l|r|r|r|r|r|r|}
    \hline
    \multicolumn{1}{|c|}{$N = T/\Delta t$} & 5,000 & 10,000 & 20,000 & 40,000 & 80,000 
    \\ \hline
    Time for finite difference marching & 167 & 340 & 677 & 1347 & 2679
     \\ \hline
    Time to obtain TBCs (butterfly) & 0.0255 & 0.0566 & 0.1226 &
     0.2563 & 0.5123
     \\
    \hline
  \end{tabular}
  \caption{Wall clock timings, in seconds, for the example with
  $V_{max} = 8000$; these are typical for Examples 2-4.}
  \label{tab:times2}
\end{table}

\section{Conclusions} \label{conclusions}

We have presented a derivation of simple TBCs for the \Schrod equation
with a vector potential in an arbitrary domain with a piecewise smooth
boundary.
For the one-dimensional case with $A(t) = 0$, these reduce to the well-known
exact nonlocal conditions. When $A(t) \neq 0$, the conditions become
non-convolutional, and we have developed a fast ``butterfly" scheme to
implement them. In order to reduce the computational complexity of the
precomputation phase of our scheme, it would be useful to explore recent variants of the
butterfly algorithm, such as \cite{hoying}, which address this issue.

In higher-dimensions, more
elaborate fast and memory-efficient algorithms will be required to
make exact TBCs conditions practical for
large-scale problems.
This work is in progress, and will be presented at a later date.

\section*{Acknowledgments} 

We would like to thank Alex Barnett, Angel Rubio, Umberto De Giovannini,
Hannes H\"ubener, Michael Ruggenthaler, and Mike O'Neil for many
useful conversations. Jason Kaye was supported in part by the Research
Training Group in Modeling and Simulation funded by the National Science
Foundation via grant RTG/DMS - 1646339.

\bibliographystyle{ieeetr}
{\footnotesize \bibliography{kg_tbcse1}}

\end{document}